\newtheorem{theorem}{Theorem}[section]
\newtheorem{proposition}[theorem]{Proposition}
\newtheorem{corollary}[theorem]{Corollary}
\theoremstyle{definition}
\newtheorem*{remark}{Remark}
\newcommand{\Z}{\mathbb{Z}}
\newcommand{\R}{\mathbb{R}}
\newcommand{\mx}{\mathbf{x}}
\newcommand{\mX}{\mathbf{X}}
\newcommand{\my}{\mathbf{y}}
\newcommand{\mF}{\mathbf{F}}
\newcommand{\mbU}{\mathbf{U}}
\newcommand{\mcU}{\mathcal{U}}
\newcommand{\mbu}{\mathbf{u}}
\newcommand{\mL}{\mathbf{L}}
\newcommand{\mC}{\mathcal{C}}
\newcommand{\oN}{\operatorname{N}}
\newcommand{\E}{\operatorname{{\bf E}}}
\renewcommand{\P}{\operatorname{\mathbb{P}}}
\newcommand{\PN}{\P_{\oN}}
\newcommand{\piN}{\pi_{\oN}}
\newcommand{\1}[1]{\operatorname{\mathbf{1}} \left \{ #1 \right \}}
\begin{document}

\title{Bak-Sneppen Backwards}

\author{Tom Alberts
\thanks{alberts@math.utah.edu}}
 \affil{University of Utah}

\author{Ga Yeong Lee
\thanks{glee2@caltech.edu}}
\affil{California Institute of Technology}

\author{Mackenzie Simper
\thanks{mackenzie.simper@utah.edu}}
\affil{University of Utah}

\date{}

\maketitle

\begin{abstract}
We study the backwards Markov chain for the Bak-Sneppen model of biological evolution and derive its corresponding reversibility equations. We show that, in contrast to the forwards Markov chain, the dynamics of the backwards chain explicitly involve the stationary distribution of the model, and from this we derive a functional equation that the stationary distribution must satisfy. We use this functional equation to derive differential equations for the stationary distribution of Bak-Sneppen models in which all but one or all but two of the fitnesses are replaced at each step. This gives a unified way of deriving Schlemm's expressions for the stationary distributions of the isotropic four-species model, the isotropic five-species model, and the anisotropic three-species model.
\end{abstract}

\section{Introduction \label{sec:intro}}

Since its first introduction in \cite{Bak_Sneppen:original}, the Bak-Sneppen model and its many novel features have become of great interest across a wide variety of disciplines for a wide variety of reasons. To biologists Bak-Sneppen is a simplified model of biological evolution that incorporates mechanisms for natural selection with spatial interaction, and has had some success \cite{bose_chaudhuri} in explaining experimental results on the fitnesses of bacteria in an evolutionary experiment \cite{Lenski_Travisano}. To physicists Bak-Sneppen is a model that exhibits so-called \textit{self-organized criticality} \cite{Bak_Tang_Wiesenfeld:SOC, jensen1998self}, the notion that a system without tunable parameters can still exhibit the typical properties of a phase transition at a critical point. From the point of view of probabilists, Bak-Sneppen is a discrete-time Markov chain on a continuous state space that is simple to describe but notoriously difficult to analyze. The dynamics of the Markov chain are as follows: finitely many species are arranged on a circle and each assigned a fitness value, initially chosen to be independent and uniformly distributed between $0$ and $1$. At each time the species with minimum fitness is eliminated and replaced with a new species with an independent fitness value that is uniform on $[0,1]$; this is the natural selection component of the model. In addition, the two species' adjacent to the species with minimum fitness are eliminated and replaced with new species with independent fitness values uniform on $[0,1]$; this is the spatial interaction component of the model. It is well known that under these dynamics there is a unique stationary distribution for the fitness values, although it generally seems to be a complicated function with no closed-form expression.

It is widely believed, however, that as the number of species in the model grows infinitely large the stationary distribution starts to become simpler. The longstanding but still unproven conjecture is that in the limit the individual species' fitnesses become uniformly distributed on an interval $(f_c, 1)$, where $f_c$ is a value somewhere close to $2/3$, and moreover that the fitnesses all become independent of each other. The latter is particularly surprising and in marked contrast to the situation for a finite number of species where the limiting fitnesses are dependent and supported on all of $(0,1)$. Strong numerical evidence for the limiting form of the one-dimensional marginals has been provided by simulations in \cite{Grassberger, jensen1998self, bak:nature}, and significant mathematical progress towards this conjecture \cite{meester_znam:limit_behavior, meester_znam:critical_thresholds, GMN:avalanche_bounds, GMV:maximal_avalanches} has been made using the notion of \textit{avalanches}. More recent papers have studied the Bak-Sneppen model using the framework of \textit{rank-driven Markov processes} \cite{GKW:BS_rank_driven, GKW:rank_driven_JSP}.

A previously unexplored aspect of the Bak-Sneppen model is its time reversed process. Time reversal is a very powerful tool in Markov chain theory: if one starts the Markov chain in stationarity then the process run backwards is also a time-homogeneous Markov chain with the same stationary distribution yet different dynamics. For the Bak-Sneppen model these reverse dynamics are particularly interesting. Both the forward chain and the reverse chain follow the same procedure of replacing three species' fitnesses at each time step, though the mechanism for how this occurs is very different for each direction. In the forwards direction of Bak-Sneppen the chosen species is entirely deterministic, given the current configuration of fitnesses, but the replacement fitness values are entirely independent of the configuration. In particular this means we have no a priori knowledge of what the new fitnesses will be. In the backwards direction of Bak-Sneppen this turns out to be entirely the opposite. The species that will be replaced is chosen randomly, with probabilities determined by the current configuration. Furthermore, the replacement fitnesses are highly influenced by the current configuration and the chosen species. In particular, the fitness value of the middle species must be chosen so that it is the minimum of the configuration at the next step.

In this paper we derive formulas for the transition rules for the reversed process. As is to be expected, they depend explicitly on the stationary distribution for the Markov chain which is in general unknown. However, we are able to use the reverse dynamics to derive a functional equation that the stationary distribution must satisfy, which provides a new tool for deriving its properties. It is not yet fully clear how effective this tool will be for analyzing the stationary distribution for large numbers of species, but one of the main results of this paper is that it is very effective for small numbers of species.

Analysis of the Bak-Sneppen model with small numbers of species was recently considered by Schlemm \cite{schlemm:four_species, schlemm:five_species}, who was able to derive exact expressions for the stationary distribution in the four and five species cases. He does this by applying the so-called power method or von Mises iteration: he repeatedly updates the fitness distribution with successive application of the transition kernel and studies the convergence of this sequence of distributions. He shows that when the initial distribution is uniform the subsequent distributions are all polynomial expressions in the fitness values (that obey certain symmetry properties), and in the limit these polynomials converge to a Taylor series which can be summed exactly. This involves setting up complicatd recursive equations for the coefficients of the polynomials and finding solutions for each individual recursion. In the four species case the solution Schlemm produces \cite{schlemm:four_species} is very explicit, and he uses the same method to find the stationary distribution for the anisotropic three-species model, in which only one of the neighboring species has its fitness parameter replaced. In the five species case \cite{schlemm:five_species} the stationary distribution is expressed as the solution to a certain system of differential equations that has no simple closed form expression.

Using the reversed process and the resulting functional equations, we are able to re-derive all of Schlemm's formulas in a shorter and more unified way. Our method does not require setting up a large collection of recursive equations and proving their convergence properties. Instead we only have to solve certain integral equations which, with modest effort, can be converted into differential equations in a small number of variables. In the isotropic four species and the anisotropic three species cases this leads to an easily solved ordinary differential equation that has a very explicit solution, see Theorem \ref{thm:frsi_models} for the exact expression. In the isotropic five species case we are able to reduce the integral equations into a system of three ordinary differential equations, whose solutions, although not explicit, can be combined to produce the stationary distribution for that model. See Theorem \ref{thm:two_non_replace} for the full statement of the solution.

Our approach has an added benefit in that it generalizes beyond just having four or five species in the model, so long as the number of \textit{non}-replaced species is kept at one or two. Both Theorems \ref{thm:frsi_models} and \ref{thm:two_non_replace} allow for a more general setup of the Bak-Sneppen model in which the set of non-replaced species at each step is kept small, so long as it occupies the same relative position with respect to the species with minimum fitness, but the set of replaced species can be made arbitrarily large. As our proofs show this extra feature requires virtually no change in the analysis.

The outline of this paper is as follows: in Section \ref{sec:reversibility} we set our notation for the model and then derive its reversibility equations in  Theorem \ref{thm:BS_adjoint}. Using this we describe the dynamics of the reverse Markov chain in Corollary \ref{cor:reverse_joint_dist}, and derive a functional equation that the stationary distribution must satisfy in Proposition \ref{prop:stationary_dist_functional_eqn}. In Section \ref{sec:small_species} we use this functional equation to re-derive Schlemm's exact expressions for the stationary distributions of the isotropic four-species model, the isotropic five-species model, and the anisotropic three-species model, along with our generalizations to the situation where the number of species is arbitrarily large but the number of non-replaced species is either one or two.
\newline \newline
\textbf{Acknowledgments: } Tom Alberts thanks Siva Athreya and Eric Cator for helpful discussions, and the support of the Scott Robert Johnson fellowship at the California Institute of Technology. Ga Yeong Lee thanks the Caltech Summer Undergraduate Research Fellowships (SURF) program and Margaret Leighton for their support of her research. Mackenzie Simper thanks the internal REU program at the University of Utah's Mathematics department for support of her research.

\section{The Bak-Sneppen Model and its Reverse Dynamics \label{sec:reversibility}}

\subsection{Definition of the Model and Notation}

The underlying dynamics are very simple to describe. The model begins with $N$ species arranged in a circle and to each one is attached a fitness parameter taking values between zero and one. Typically the initial choice is to make them iid uniform random variables. At each iteration the species with minimum fitness is selected, and its fitness \textit{along with that of some of its neighbors} is replaced with new and independently chosen uniform random fitnesses. The selection of the minimum fitness is a way to model natural selection while the rule that the neighbors also have their fitnesses replaced is a simplified model of spatial interaction between the species.

We describe the Bak-Sneppen model using the language of Markov chains. For an integer $N \geq 4$ we label the different species of the population by $\Z_N = \Z / (N \Z)$, the group of integers modulo $N$, and it is understood that all operations on the species' labels are performed modulo $N$. The Markov chain takes values in the hypercube $[0,1]^{\Z_N}$. For shorthand we will use the symbol $\mC_N$ to denote the hypercube. Generic elements of this space we write as $\mx = (\mx_1, \mx_2, \ldots, \mx_N)$. We also write $\mF_0, \mF_1, \mF_2, \ldots \in \mC_N$ for the random sequence of fitness configurations that form the Markov chain. Note that for bold lowercase letters the subscript indicates the particular species within the fitness configuration, while for bold uppercase letters the subscript denotes the time in the Markov chain. We denote the uniform measure on the hypercube by $\mcU_N$, and for the forward Markov chain it is standard to assume that $\mF_0 \sim \mcU_N$. When studying the backwards Markov chain we will typically make the assumption that $\mF_0$ starts from the stationary distribution. We then introduce sets $E_i$ that will allow us to describe the dynamics of the Bak-Sneppen Markov chain in wide generality, including both the isotropic and anisotropic cases. For each $i \in \Z_N$ let $E_i$ be the set of species whose fitnesses will be replaced if $i$ is the species with minimum fitness . In the isotropic case $E_i = \{i-1, i, i+1 \}$ and in the anisotropic case $E_i = \{ i, i+1 \}$, but many of our later arguments apply to arbitrary choices of the $E_i$. For a given choice we let $\mL_i$ be the projection operator onto the linear subspace $\R^{E_i}$, i.e. $\mL_i$ takes a vector in $\R^{\Z_N}$ and sets to zero all the entries corresponding to indices that are not in $E_i$. We also write $\mL_i^c$ for the projection onto $\R^{E_i^c}$; note then that $\mathbf{I} = \mL_i + \mL_i^c$ where $\mathbf{I}$ is the identity operator. By defining $a(\mx) = \operatorname{argmin} \mx$ (which is well-defined Lebesgue a.e.), the Bak-Sneppen dynamics can be written as
\begin{align}\label{eqn:BS_forward_dynamics}
\mF_{k+1} = \mF_{k} + \mL_{a(\mF_k)}(\mbU_k - \mF_k),
\end{align}
where $\mbU_0, \mbU_1, \ldots$ are iid uniform random variables in the hypercube. The second term on the right hand side replaces the fitness parameters determined by the minimum fitness with new and independent uniform random variables. For functions $f : \mC_N \to \R$ this Markov chain is equivalently described by the Markov operator
\begin{align}
Pf(\mx) := \E \left[ \left. f \left( \mF_{k+1} \right) \right| \mF_k = \mx \right] &= \E_{\mcU_n} \! \left [ f \left( \mx + \mL_{a(\mx)}(\mbU - \mx) \right) \right] \notag \\
&= \int_{\mC_N} \!\! f \! \left( \mx + \mL_{a(\mx)}(\mathbf{u} - \mx)  \right) \, d \mathbf{u}. \label{eqn:BS_forward_operator}
\end{align}
In the next section we will determine the adjoint of this operator on the function space $L^2(\mC_N, \PN)$, where $\PN$ is the stationary measure for this chain. The adjoint is the Markov operator corresponding to the backwards chain and from it we can deduce the reverse dynamics. Before proceeding with this we collect some useful facts about the stationary distribution, all of which are proved in other works.

\begin{theorem}
There exists a unique probability measure $\PN$ on $\mC_N$ that is stationary for the Bak-Sneppen model. Moreover $\PN$ is absolutely continuous with respect to Lebesgue measure.
\end{theorem}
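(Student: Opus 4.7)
The theorem bundles three claims: existence of an invariant probability measure, uniqueness, and absolute continuity with respect to Lebesgue measure. My plan is to handle existence by a Krylov--Bogolyubov argument on the compact state space $\mC_N$ and to derive uniqueness and absolute continuity together from a Doeblin-type minorization on a sufficiently large iterate of the transition kernel $P$.

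For existence, the space $\mC_N = [0,1]^{\Z_N}$ is compact, and the operator $P$ from \eqref{eqn:BS_forward_operator} sends bounded measurable functions to bounded measurable functions, with its only discontinuities concentrated on the Lebesgue null set of configurations with tied minima. Starting from $\mcU_N$ and forming the Ces\`aro averages $n^{-1}\sum_{k=0}^{n-1} \mcU_N P^k$ produces a tight family on the compact space of probability measures on $\mC_N$ with the weak topology, and any weak subsequential limit is $P$-invariant (the fact that tie configurations carry zero mass under every $\mcU_N P^k$ is enough to push $P$ through the limit).

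For uniqueness and absolute continuity, the plan is to prove a minorization of the form
\begin{align*}
P^n(\mx, \cdot) \;\geq\; \varepsilon \, \lambda(\cdot)
\end{align*}
for some $n \in \N$, some $\varepsilon > 0$, and Lebesgue almost every $\mx \in \mC_N$, where $\lambda$ is normalized Lebesgue measure. Granting this, any invariant probability measure $\PN$ satisfies $\PN = \PN P^n \geq \varepsilon \lambda$, which gives absolute continuity immediately, while the standard coupling argument based on the residual sub-probability kernel $(P^n - \varepsilon \lambda)/(1 - \varepsilon)$ gives total-variation contraction at rate $(1-\varepsilon)^{k}$ along the $n$-th iterates, and hence uniqueness.

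The main work is therefore to prove the minorization. The idea is to describe an explicit scenario, occurring with uniformly positive probability from any starting $\mx$, in which every coordinate of the fitness configuration is overwritten within $n := N$ iterations. If one conditions on $\mbU_0$ being very small on $E_{a(\mx)}$ with its minimum at a prescribed neighboring site, then the next argmin is forced to lie at that site; iterating this, one can march the argmin around the ring $\Z_N$ so that every coordinate is refreshed at least once, and conditional on this refresh pattern $\mF_n$ has a density on $\mC_N$ bounded below by a positive constant on a set of positive Lebesgue measure. The main obstacle is making the steering uniform in $\mx$: at each intermediate step the argmin depends on both the freshly sampled uniforms and on the surviving entries of $\mx$, so one has to carve enough slack into the prescribed sizes of the $\mbU_k$'s to dominate the worst-case ordering of the un-refreshed entries. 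Once this combinatorial argmin-steering is in hand, integrating over the remaining freedom in $(\mbU_0,\ldots,\mbU_{n-1})$ produces the required uniform density lower bound.
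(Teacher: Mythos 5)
The paper does not actually prove this theorem; it delegates everything to citations (Gillett's thesis and Schlemm's four-species paper, both built on Meyn--Tweedie-style minorization arguments), so your decision to run a Doeblin minorization is very much in the spirit of the proofs being invoked. The minorization itself, and the coupling/contraction deduction of uniqueness from it, are sound in outline (modulo fixing a convention for $a(\mx)$ on the tie set so the bound holds for \emph{every} $\mx$, not just almost every $\mx$ -- otherwise $\PN P^n \geq \varepsilon\lambda$ requires knowing the exceptional set is $\PN$-null, which is circular before absolute continuity is established).

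The genuine gap is in the absolute continuity step: the implication is backwards. From $\PN = \PN P^n \geq \varepsilon\lambda$ you get $\PN(B)=0 \Rightarrow \lambda(B)=0$, i.e.\ $\lambda \ll \PN$, which is the \emph{opposite} of the claim $\PN \ll \lambda$. And the claim does not follow ``immediately'' from any lower bound, because each kernel $P^n(\mx,\cdot)$ genuinely has a singular component: on the event that some coordinate is never refreshed in $n$ steps, $\mF_n$ retains a deterministic coordinate of $\mx$ and the corresponding piece of $P^n(\mx,\cdot)$ lives on a lower-dimensional slice. To get $\PN \ll \lambda$ you must show the singular part of $\PN$ vanishes, e.g.\ by decomposing $\PN = \PN P^n$ according to the refresh pattern over $n$ steps, noting that the component on ``every site refreshed at least once'' is absolutely continuous with respect to $\lambda$, and then using your steering estimate (or irreducibility) to show the $\PN$-probability that some site survives $n$ steps unrefreshed tends to $0$ as $n \to \infty$. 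Separately, your Krylov--Bogolyubov existence argument needs the weak \emph{limit} measure, not just each $\mcU_N P^k$, to assign zero mass to the discontinuity (tie) set before you can pass $P$ through the limit; this is repairable, but it is also unnecessary, since once the minorization is in hand the map $\mu \mapsto \mu P^n$ is a total-variation contraction on the complete space of probability measures and the Banach fixed point theorem delivers existence and uniqueness simultaneously.
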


\begin{proof}
Existence and uniqueness is proved in \cite{gillet:thesis}, based on techniques from \cite{meyn_tweedie:MC}. That proof holds on general graphs and therefore applies to our situation of arbitrary sets $E_i$. A similar proof is found in \cite{schlemm:four_species} for the typical model with $E_i = \{i-1, i, i+1 \}$ and also includes a proof of the absolute continuity, which can easily be extended to handle the case of general sets $E_i$.
\end{proof}

In the remainder of the paper we will write
\begin{align*}
d \PN(\mx) = \piN(\mx) \, d \mx.
\end{align*}
Implicitly this depends on the choice of the sets $E_i$, but this will always be clear from the context.

\subsection{The Reverse Dynamics}

The reversible dynamics describe the evolution of the process $\mF_{T-k}, 0 \leq k \leq T$, where $T$ is a large positive integer and $\mF_0$ is assumed to be distributed according to $\PN$. With this distribution for $\mF_0$ the reverse chain is also a time-homogeneous Markov process, but with different dynamics governing its evolution. In this section we determine exactly what the dynamics are for the reverse chain. Their main feature is that they explicitly involve the unknown stationary distribution $\PN$, in contrast to the forward dynamics which make no reference to it.

To determine the reversible dynamics for the Bak-Sneppen model we compute the adjoint of the Markov operator $P$. Henceforth we will call this adjoint $Q$, and we recall that it satisfies
\begin{align}
\langle f, Pg \rangle_{\PN} = \langle Qf, g \rangle_{\PN}
\end{align}
for all functions $f, g \in L^2(\mC_N, \PN)$. The inner product is the standard $L^2$ one
\begin{align*}
\langle f, g \rangle_{\PN} = \E_{\PN} \left[ f(\mX) g(\mX) \right] = \int_{\mC_N} f(\mx) g(\mx) \, d \PN(\mx).
\end{align*}
The equivalent description of $Q$ is as the Markov operator for the reverse chain, that is
\begin{align*}
Qf(\mx) = \E \left[ \left. f \left( \mF_{T-k-1} \right) \right| \mF_{T-k} = \mx \right],
\end{align*}
so long as $\mF_0 \sim \PN$.

\begin{theorem}\label{thm:BS_adjoint}
The adjoint operator $Q$ acts on $L^2(\mC_N, \PN)$ via the formula
\begin{align}
Qf(\mx) = \frac{1}{\piN(\mx)} \sum_{i \in \Z_N} H_i f (\mx),
\end{align}
where the operators $H_i$ are defined by
\begin{align*}
H_i f(\mx) = \int_{\mC_N} f(\mL_i \my + \mL_i^c \mx) \1{a(\mL_i \my + \mL_i^c \mx) = i} \piN(\mL_i \my + \mL_i^c \mx) \, d \my.
\end{align*}
\end{theorem}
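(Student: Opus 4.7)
The plan is to verify the defining adjoint identity $\langle f, Pg \rangle_{\PN} = \langle Qf, g \rangle_{\PN}$ directly, by expanding the left-hand side via the forward operator formula (\ref{eqn:BS_forward_operator}) and then recovering the right-hand side through a single symmetric change of variables. First I would write
$$\langle f, Pg \rangle_{\PN} = \int_{\mC_N} \int_{\mC_N} f(\mx) \, g\bigl(\mx + \mL_{a(\mx)}(\mbu - \mx)\bigr) \, \piN(\mx) \, d\mbu \, d\mx,$$
and partition the $\mx$-integration over the Lebesgue-a.e.\ disjoint events $\{a(\mx) = i\}$ for $i \in \Z_N$; this is legitimate because $\piN$ is absolutely continuous, so ties for the minimum occur with probability zero. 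On each such event the argument of $g$ simplifies to $\mL_i \mbu + \mL_i^c \mx$, which depends on $\mbu$ only through its $E_i$-coordinates.

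The heart of the computation is the change of variables $\mL_i \mx \leftrightarrow \mL_i \mbu$ within each summand. Writing $\mx = \mL_i \mx + \mL_i^c \mx$ and similarly splitting $\mbu$, the Lebesgue measure on $\mC_N \times \mC_N$ factors as a product across $[0,1]^{E_i} \times [0,1]^{E_i^c} \times [0,1]^{E_i} \times [0,1]^{E_i^c}$, and since the integrand does not depend on $\mL_i^c \mbu$ that factor integrates trivially to one. Swapping the two $[0,1]^{E_i}$ blocks is volume-preserving, and after relabeling the outer variable as $\mx$ and the inner one as $\my$ the integrand becomes $f(\mL_i \my + \mL_i^c \mx) \, g(\mx) \, \1{a(\mL_i \my + \mL_i^c \mx) = i} \, \piN(\mL_i \my + \mL_i^c \mx)$, at which point $g(\mx)$ can be pulled outside the inner $\my$-integral. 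Recognizing the remaining inner integral as $H_i f(\mx)$ and summing over $i$, I would multiply and divide by $\piN(\mx)$ to arrive at
$$\langle f, Pg \rangle_{\PN} = \int_{\mC_N} g(\mx) \left( \frac{1}{\piN(\mx)} \sum_{i \in \Z_N} H_i f(\mx) \right) \piN(\mx) \, d\mx,$$
which is $\langle Qf, g \rangle_{\PN}$ for the claimed $Q$.

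I do not expect any serious analytic obstacle. Since $P$ is a Markov operator it is bounded on $L^2(\mC_N, \PN)$, so both sides are finite for $f, g \in L^2$ and Fubini applies freely to all the nested integrals. The only real care is the notational bookkeeping needed to keep the projections $\mL_i$ and $\mL_i^c$ straight through the swap, and the mild observation that the almost-everywhere uniqueness of $a(\mx)$ permits the partition in the first step. Everything else is algebraic rearrangement.
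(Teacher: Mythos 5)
Your proposal is correct and follows essentially the same route as the paper: expand $\langle f, Pg\rangle_{\PN}$ via \eqref{eqn:BS_forward_operator}, partition on the a.e.-disjoint events $\{a(\mx)=i\}$, drop the trivial $\mL_i^c\mbu$ integration, and perform a volume-preserving relabeling of the two $[0,1]^{E_i}$ blocks before recognizing $H_i f$ and dividing by $\piN$. The only cosmetic difference is that the paper first identifies the inner integral as $H_i f(\mx_i^c)$ and then substitutes $\my = \mx_i^c + \mbu_i$, whereas you swap the blocks first; both rely on the same Fubini argument and on the fact that $H_i f$ depends only on $\mL_i^c$ of its argument.
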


\begin{remark}
The operators $H_i$ are essentially expectations with respect to the marginal distribution of $\mL_i^c \mX$, where $\mX$ is distributed according to $\P_N$. The indicator function term only integrates over vectors $\my$ which put the minimum of $\mL_i \my + \mL_i^c \mx$ at species $i$. Note that the right hand side of the equation depends only on $\mL_i^c \mx$, hence if $\mL_i^c \mx_1 = \mL_i^c \mx_2$ then $H_i f(\mx_1) = H_i f(\mx_2)$.
\end{remark}

\begin{proof}
In this proof we use the notation $\mx_i = \mL_i \mx$, $\mx_i^c = \mL_i^c \mx$ for the projections of a vector $\mx \in \mC_N$, and for a set $A \subset \Z_N$ of species we write $\mC(A) = [0,1]^A$. Now let $f, g \in L^2(\mC_N, \PN)$. Then by definition of $Pg$ and the inner product we have
\begin{align*}
\langle f, Pg \rangle_{\PN} &= \int_{\mC_N} f(\mx) Pg(\mx) \piN(\mx) \, d \mx \\
&= \int_{\mC_N} \int_{\mC_N} f(\mx) g(\mx + \mL_{a(\mx)}(\mbu - \mx)) \piN(\mx) \, d \mbu \, d \mx \\
&= \sum_{i \in \Z_N} \int_{\mC_N} \int_{\mC_N}  f(\mx) g(\mx + \mL_{i}(\mbu - \mx)) \1{a(\mx) = i} \piN(\mx) \, d \mbu \, d \mx \\
&= \sum_{i \in \Z_N} \int_{\mC(E_i^c)} \int_{\mC(E_i)} \int_{\mC(E_i)} \!\!\!\! f(\mx_i + \mx_i^c) g(\mx_i^c + \mbu_i) \1{a(\mx_i + \mx_i^c) = i} \piN(\mx_i + \mx_i^c) \, d \mbu_i \, d \mx_i \, d \mx_i^c \\
&= \sum_{i \in \Z_N} \int_{\mC(E_i^c)} \int_{\mC(E_i)} \!\! g(\mx_i^c + \mbu_i) \int_{\mC(E_i)} \!\!\!\! f(\mx_i + \mx_i^c) \1{a(\mx_i + \mx_i^c) = i} \piN(\mx_i + \mx_i^c) \, d \mx_i \, d \mbu_i \, d \mx_i^c \\
&= \sum_{i \in \Z_N} \int_{\mC(E_i^c)} \int_{\mC(E_i)} \!\! g(\mx_i^c + \mbu_i) H_i f(\mx_i^c) \, d \mbu_i \, d \mx_i^c \\
&= \sum_{i \in \Z_N} \int_{\mC_N} g(\my) H_i f(\my - \mL_i y) \, d \my \\
&= \sum_{i \in \Z_N} \int_{\mC_N} g(\my) \frac{H_i f(\my)}{\piN(\my)} \piN(\my) \, d \my.
\end{align*}
The second last equality follows by writing $\my = \mx_i^c + \mbu_i$ and noting that for Lebesgue measure $d \my = d \mbu_i \, d \mx_i^c$. By definition of $\my$ we have $\mL_i \my = \mbu_i$, and by the remark before the proof we also have that $H_if(\my - \mL_i \my) = H_if(\my)$.
\end{proof}

From the adjoint $Q$ we can derive a description for the dynamics of the backwards Markov chain. As we mentioned in the introduction, the backwards Markov chain chooses a species $i$ at random and then replaces the fitness parameters of the species in the set $E_i$ with randomly chosen fitnesses. Therefore the backwards Markov chain is fully described by the joint distribution of the chosen species and the replaced fitnesses. For the forwards Markov chain this joint distribution only depends on the location of the species with the minimum fitness, while for the backwards Markov chain it depends much more heavily on the current value of all the fitness parameters.

\begin{corollary}\label{cor:reverse_joint_dist}
Suppose $\mF_0 \sim \PN$. Conditionally on $\mF_{T-k} = \mx$, the joint probability of the chosen species and the replaced fitnesses for $\mF_{T-k-1}$ is
\begin{align*}
\1{a(\mL_i \my + \mL_i^c \mx) = i} \, \frac{\piN(\mL_i \my + \mL_i^c \mx)}{\piN(\mx)} \, d \my.
\end{align*}
In particular, the probability that species $i$ is selected is
\begin{align*}
\frac{1}{\piN(\mx)} \int_{\mC_N} \1{a(\mL_i \my + \mL_i^c \mx) = i} \piN(\mL_i \my + \mL_i^c \mx) \, d \my.
\end{align*}
\end{corollary}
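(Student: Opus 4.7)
The corollary is essentially immediate from Theorem \ref{thm:BS_adjoint}, so my plan is to unpack the explicit formula for $Qf(\mx)$ given by the theorem and read off the joint distribution directly. The key observation is that the theorem already writes $Qf$ as a weighted sum over species $i$ and integrals over $\my$, and those weights are precisely the joint density we want.

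First, I would substitute the definition of $H_i$ into $Qf(\mx) = \frac{1}{\piN(\mx)} \sum_i H_i f(\mx)$ and combine with the identification $Qf(\mx) = \E[f(\mF_{T-k-1}) \mid \mF_{T-k} = \mx]$ to obtain
\begin{align*}
\E[f(\mF_{T-k-1}) \mid \mF_{T-k} = \mx] = \sum_{i \in \Z_N} \int_{\mC_N} f(\mL_i \my + \mL_i^c \mx) \, w(i, \my; \mx) \, d\my,
\end{align*}
where I set $w(i, \my; \mx) := \1{a(\mL_i \my + \mL_i^c \mx) = i} \, \piN(\mL_i \my + \mL_i^c \mx) / \piN(\mx)$.

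Second, I would verify that $w(i, \my; \mx)$ is a genuine probability density on $\Z_N \times \mC_N$ (counting measure times Lebesgue measure). Non-negativity is clear, and specializing to $f \equiv 1$ and using the Markov identity $Q\mathbf{1} = \mathbf{1}$ gives $\sum_i \int w(i, \my; \mx) \, d\my = 1$. Drawing $(i, \my)$ from $w$ and then setting $\mF_{T-k-1} = \mL_i \my + \mL_i^c \mx$ therefore produces the correct conditional distribution for $\mF_{T-k-1}$ given $\mF_{T-k} = \mx$. Moreover the indicator $\1{a(\mL_i \my + \mL_i^c \mx) = i}$ forces the sampled $i$ to coincide with $a(\mF_{T-k-1})$, so the label $i$ can consistently be interpreted as the species that would be selected by the forward dynamics, justifying the first displayed formula.

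For the marginal probability that species $i$ is selected, I would simply integrate out $\my$:
\begin{align*}
\P(\text{species } i \text{ selected} \mid \mF_{T-k} = \mx) = \int_{\mC_N} w(i, \my; \mx) \, d\my = \frac{1}{\piN(\mx)} \int_{\mC_N} \1{a(\mL_i \my + \mL_i^c \mx) = i} \, \piN(\mL_i \my + \mL_i^c \mx) \, d\my,
\end{align*}
which is the second claimed expression. There is really no substantive obstacle here; the whole argument is a bookkeeping exercise. The one conceptual point that needs care is that $Q$ by itself only determines the conditional law of $\mF_{T-k-1}$ given $\mF_{T-k}$, not the joint law of the (chosen species, replaced fitnesses) pair. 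This is harmless because the ``chosen species'' is the deterministic functional $i = a(\mF_{T-k-1})$ of $\mF_{T-k-1}$, so no randomness is introduced beyond what $Q$ already describes.
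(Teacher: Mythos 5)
Your proposal is correct and follows essentially the same route as the paper's (very brief) proof: read the joint density off from the formula for $Q$ in Theorem \ref{thm:BS_adjoint}, integrate out $\my$ for the marginal selection probability, and observe that the indicator makes the chosen species the deterministic functional $a(\mF_{T-k-1})$ of the new configuration, so no extra randomness is involved. The only difference is that you spell out the normalization via $Q\mathbf{1}=\mathbf{1}$ and the identifiability of $i$ explicitly, which the paper leaves implicit.
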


\begin{proof}
This is essentially a restatement of Theorem \ref{thm:BS_adjoint}. For the second part one can simply integrate out the $\my$ variable of the joint density, or use Theorem \ref{thm:BS_adjoint} on the function $f(\mx) = \1{a(\mx) = i}$. This is because for the backwards chain the chosen species is the one that is assigned the minimum fitness value.
\end{proof}

\subsection{A Functional Equation for the Stationary Distribution}

\begin{proposition}\label{prop:stationary_dist_functional_eqn}
The stationary distribution $\piN$ satisfies the functional equation
\begin{align}\label{eqn:stationary_dist_functional_eqn}
\piN(\mx) = \sum_{i \in \Z_N} \int_{\mC_N} \1{a(\mL_i \my + \mL_i^c \mx) = i} \piN(\mL_i \my + \mL_i^c \mx) \, d \my.
\end{align}
\end{proposition}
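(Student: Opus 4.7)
The plan is to extract the functional equation as the normalization condition for the conditional distribution given in Corollary \ref{cor:reverse_joint_dist}, or equivalently as the statement $Q\mathbf{1} = \mathbf{1}$ applied to the formula in Theorem \ref{thm:BS_adjoint}. Both routes are essentially one line once set up.

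First, I would argue that $Q\mathbf{1} = \mathbf{1}$, where $\mathbf{1}$ denotes the constant function equal to one. This can be verified directly from the adjoint identity: pairing with any $g \in L^2(\mC_N,\PN)$ gives
\begin{align*}
\langle Q\mathbf{1}, g \rangle_{\PN} = \langle \mathbf{1}, Pg \rangle_{\PN} = \E_{\PN}[Pg] = \E_{\PN}[g] = \langle \mathbf{1}, g \rangle_{\PN},
\end{align*}
where the third equality is stationarity of $\PN$ under $P$. Since this holds for all $g$, we conclude $Q\mathbf{1} = \mathbf{1}$ in $L^2(\mC_N, \PN)$.

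Next, I would apply the explicit formula for $Q$ from Theorem \ref{thm:BS_adjoint} to $f \equiv \mathbf{1}$. Since $H_i \mathbf{1}(\mx) = \int_{\mC_N} \1{a(\mL_i \my + \mL_i^c \mx) = i}\, \piN(\mL_i \my + \mL_i^c \mx)\, d\my$, the identity $Q\mathbf{1}(\mx) = 1$ reads
\begin{align*}
1 = \frac{1}{\piN(\mx)} \sum_{i \in \Z_N} \int_{\mC_N} \1{a(\mL_i \my + \mL_i^c \mx) = i} \piN(\mL_i \my + \mL_i^c \mx) \, d\my,
\end{align*}
and multiplying through by $\piN(\mx)$ yields (\ref{eqn:stationary_dist_functional_eqn}). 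Alternatively, the same equation can be obtained by noting that the joint density in Corollary \ref{cor:reverse_joint_dist} is a genuine probability distribution over the pair (species, replacement vector), so summing over $i$ and integrating over $\my$ must produce $1$, and clearing the denominator $\piN(\mx)$ gives the same conclusion.

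There is no real obstacle, and I would not expect technical difficulties: the only minor point worth noting is that the equality holds $\piN$-almost everywhere (equivalently Lebesgue-a.e. on the support), which is enough for the intended downstream applications that treat $\piN$ as a density.
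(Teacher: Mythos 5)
Your proposal is correct and follows essentially the same route as the paper: apply the explicit formula for $Q$ from Theorem \ref{thm:BS_adjoint} to the constant function and use $Q\mathbf{1} = \mathbf{1}$. Your explicit verification of $Q\mathbf{1}=\mathbf{1}$ via the adjoint pairing and stationarity is a slightly more careful justification of a step the paper simply asserts, but it is the same argument.
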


\begin{proof}
Consider the constant function $f : \mC_N \to \R$ given by $f(\mx) = 1$. Since $Q$ is the Markov operator of the reverse chain we automatically have $Qf(\mx) = 1$ for all $\mx$. The result then follows from Theorem \ref{thm:BS_adjoint}.
\end{proof}

\begin{remark}
Proposition \ref{prop:stationary_dist_functional_eqn} can also be derived from the standard adjoint equation. Indeed if $P^*$ is the adjoint of $P$ in the space $L^2(\mC_N, d \mx)$, then $\piN$ is the unique solution to $P^* \piN = \piN$ that is also a probability distribution. Calculations similar to those in the proof of Theorem \ref{thm:BS_adjoint} give us that
\begin{align*}
P^*f(\mx) = \sum_{i \in \Z_N} \int_{\mC_N} f(\mL_i \my + \mL_i^c \mx) \1{a(\mL_i \my + \mL_i^c \mx) = i} \, d \my,
\end{align*}
from which one sees that $P^* \piN = \piN$ is equivalent to \eqref{eqn:stationary_dist_functional_eqn}. We have chosen to make our derivation this way since it also gives us a description of the reverse dynamics.
\end{remark}

\begin{remark}
Each term on the right hand side of \eqref{eqn:stationary_dist_functional_eqn} is a function of $\mL_i^c \mx$ since the $\my$ variables are integrated out. For ``shift-invariant'' models where $E_i = i + E_0$ for all $i$, which includes the standard isotropic and anisotropic Bak-Sneppen models, symmetry arguments show that it is the same function for each $i$. That is, for each fixed $N$ there exists a function $q_N : [0,1]^{E_0^c} \to \R_+$ such that
\begin{align*}
\piN(\mx) = \sum_{i \in \Z_N} q_N(\mL_i^c \mathcal{S}^{-i} \mx),
\end{align*}
where $\mathcal{S}$ is the shift operator that cyclically increments the label of each species. This decomposition is similar to the one found in \cite[Proposition 2]{schlemm:four_species}, and we will use it for determining exact solutions for small numbers of species.
\end{remark}

\section{Exact Solutions for Small Numbers of Non-Replaced Species \label{sec:small_species}}

In this section we show that for small numbers of non-replaced species, that is when $E_0$ is almost all of $\Z_N$, equation \eqref{eqn:stationary_dist_functional_eqn} can be used to derive an explicit expression for $\piN$.

\subsection{Shift Invariant Models for One Non-Replaced Species}

Here we consider the forwards Bak-Sneppen model with $N$ species in which, at each time, all but one of the species have their fitnesses replaced, including the species with minimum fitness. Note that this includes the isotropic four-species model and the anisotropic three-species model studied by Schlemm. We also assume that the model is shift invariant, in the sense that $E_i = i + E_0$ for each $i$. In words this means that the set of species replaced at each time is a re-centering of a given fixed set around the species with minimum fitness.

\begin{theorem}\label{thm:frsi_models}
Suppose that $E_i = i + E_0$ for each $i$, and that $E_0 = \Z_N \backslash \{ k \}$ for some $k \neq 0$. Then the stationary distribution $\piN(\mx)$ is
\begin{align*}
\piN(\mx) = \frac{(N-1)(N-2)}{N} \sum_{i \in \Z_N} \frac{1 - (1-\mx_i)^{N-1}}{\left( (1 - \mx_i)^{N-1} + N-2 \right)^2}.
\end{align*}
\end{theorem}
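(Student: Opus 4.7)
The plan is to combine the functional equation \eqref{eqn:stationary_dist_functional_eqn} with the shift-invariance remark immediately following Proposition \ref{prop:stationary_dist_functional_eqn}. Since $|E_0^c| = 1$, that remark provides a one-variable ansatz
\begin{equation*}
\piN(\mx) = \sum_{j \in \Z_N} q_N(\mx_j)
\end{equation*}
for some function $q_N : [0,1] \to \R$, and the strategy is to substitute this ansatz into \eqref{eqn:stationary_dist_functional_eqn} and reduce the result to a self-contained equation for $q_N$.

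For the $i$-th term on the right of \eqref{eqn:stationary_dist_functional_eqn}, the configuration $\mL_i \my + \mL_i^c \mx$ has the single frozen coordinate $\mx_{i+k}$ at position $i+k$ and $\my$-components everywhere else. The indicator $\1{a = i}$ restricts $\my_i$ to be smaller than $\mx_{i+k}$ and smaller than $\my_j$ for each $j \in E_i \setminus \{i\}$. After the ansatz is substituted, the integrand becomes the symmetric sum $\sum_{j \in E_i} q_N(\my_j) + q_N(\mx_{i+k})$, and elementary order-statistics integrations collapse the resulting $(N-1)$-dimensional integral into a function $I(t)$ of $t = \mx_{i+k}$ alone; by shift-invariance $I$ is independent of $i$. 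Reindexing the outer sum turns the functional equation into $\sum_j q_N(\mx_j) = \sum_j I(\mx_j)$ for every $\mx$, which upon fixing all but one coordinate forces $q_N - I$ to be constant and then, on summing, to vanish. Isolating the $q_N(t)$ contribution that factors out of the $\my$-integration and moving it to the left produces the integral equation
\begin{equation*}
q_N(t) \cdot \frac{N-2+(1-t)^{N-1}}{N-1} = \int_0^t q_N(u)(1-u)^{N-2}\, du + (N-2)\int_0^t (1-u)^{N-3}\bigl(Q(1)-Q(u)\bigr)\, du,
\end{equation*}
where $Q(u) := \int_0^u q_N(s)\, ds$.

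To solve, I would differentiate once and recast the resulting integro-differential equation as a second-order linear ODE for $R(t) := Q(1) - Q(t)$, whose coefficients are rational in $1-t$ and $A(t) := N-2 + (1-t)^{N-1}$. The natural ansatz $R(t) = c(1-t)/A(t)$ combines the two characteristic factors in the simplest way and, using the identity $\tfrac{d}{dt}[(1-t)/A(t)] = -(N-2)[1-(1-t)^{N-1}]/A(t)^2$, is quickly verified to satisfy the ODE. Extracting $q_N(t) = -R'(t)$ then yields $q_N(t) = c(N-2)[1-(1-t)^{N-1}]/A(t)^2$. The probability normalization $1 = \int_{\mC_N} \piN\, d\mx = N\int_0^1 q_N = N R(0) = Nc/(N-1)$ fixes $c = (N-1)/N$, producing the claimed formula, and uniqueness of the stationary distribution completes the argument. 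The main obstacle is the bookkeeping in the integral-equation derivation: before the ansatz is applied, the right-hand side of \eqref{eqn:stationary_dist_functional_eqn} involves $N-1$ integration variables together with an argmin indicator, so a careful symmetrization is needed to identify which pieces of the decomposed $\piN$ contribute to each order-statistics integration.
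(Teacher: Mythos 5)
Your proposal is correct and follows essentially the same route as the paper: the same one-variable ansatz $\piN(\mx)=\sum_j q_N(\mx_j)$ from the shift-invariance remark, the same order-statistics reduction to the identical integral equation $q_N(t)\,\frac{N-2+(1-t)^{N-1}}{N-1}=\int_0^t q_N(u)(1-u)^{N-2}\,du+(N-2)\int_0^t(1-u)^{N-3}(Q(1)-Q(u))\,du$, and the same normalization $Q(1)=1/N$. The only immaterial divergence is the last step, where the paper solves the resulting first-order linear ODE for $Q_N$ directly via the product rule, while you verify the equivalent ansatz $Q(1)-Q(t)=c(1-t)/(N-2+(1-t)^{N-1})$; both yield the stated formula.
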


\begin{proof}
By the second remark after Proposition \ref{prop:stationary_dist_functional_eqn}, the full-replacement and shift invariance assumptions give that $\piN$ has the form
\begin{align*}
\piN(\mx) = \sum_{i \in \Z_N} q_N (\mx_i)
\end{align*}
for a to-be-determined function $q_N : [0,1] \to \R_+$. By plugging this form into equation \eqref{eqn:stationary_dist_functional_eqn} and collecting all of the individual fitness variables $\mx_i$ into individual equations we get a collection of functional equations that $q_N$ must satisfy. By the shift invariance assumption it is the same functional equation for each fitness variable, and so we may assume that $i=0$. Using that $\mL_0^c \mx = \mx_k$ we have
\begin{align*}
q_N(\mx_k) = \int_{\mC_N} \1{a(\mL_0 \my + \mL_0^c \mx) = 0} \left( q_N(\mx_k) + \!\!\!\! \sum_{j \in \Z_n \backslash \{ k \}} \!\!\!\! q_N(\my_j) \right ) \, d \my.
\end{align*}
The indicator function constrains $\my$ to take on values where the minimum of $\mL_0 \my + \mL_0^c \mx$ is at species zero, with the fitness value $\mx_k$ given and remaining fixed. Therefore the constraint on the integration is equivalent to $\my_0 \leq \mx_k$, and $\my_j \geq \my_0$ for all $j \neq 0,k$. Writing from now on $\mx_k = x$, the latter equation becomes
\begin{align*}
q_N(x) = \int_{0}^{x} \int_{\mC_N} \left( q_N(x) + \!\!\!\! \sum_{j \in \Z_n \backslash \{ k \}} \!\!\!\! q_N(\my_j) \right ) \left( \prod_{j \neq 0, k} \1{\my_j \geq \my_0} d \my_j \right)  \, d \my_0.
\end{align*}
Now introduce the notation $Q_N(x) = \int_0^x q_N(t) \, dt$. With this the inner integral is simple to evaluate, although one should note that in the summation term the case $j=0$  requires special care. This leaves us with
\begin{align*}
q_N(x) = \int_0^x (1 - \my_0)^{N-2} (q_N(x) + q_N(\my_0)) + (N-2)(1 - \my_0)^{N-3} (Q_N(1) - Q_N(\my_0)) \, d \my_0.
\end{align*}
Straightforward calculations give
\begin{align*}
q_N(x) = \frac{1}{N-1} ( 1 - (1-x)^{N-1} ) q_N(x) + ( 1 - (1-x)^{N-2} ) Q_N(1) + (1-x)^{N-2}Q_N(x).
\end{align*}
This is an ordinary differential equation for $Q_N$ with boundary conditions $Q_N(0) = 0$ and $Q_N(1) = 1/N$, the latter following from the constraint that $\piN$ must integrate to $1$. The ODE is easily solved via the product rule to give
\begin{align*}
Q_N(x) = \frac{(N-1)x + (1-x)^{N-1} - 1}{N((1-x)^{N-1} + N-2)},
\end{align*}
which we then differentiate to get
\begin{align*}
q_N(x) = \frac{(N-1)(N-2)}{N} \frac{1 - (1-x)^{N-1}}{((1-x)^{N-1} + N-2)^2}.
\end{align*}
\end{proof}

\begin{remark}
For $N=4$ simple algebra gives
\begin{align}
q_4(x) = \frac{3 \cdot 2}{4} \frac{1 - (1-x)^3}{((1-x)^3 + 2)^2} = \frac{3}{2} \frac{x(3-x(3-x))}{(3-x(3-x(3-x)))^2}, \label{eqn: four_species_exact}
\end{align}
which agrees with the solution of Schlemm \cite{schlemm:four_species} for the isotropic four-species model. Similarly, for $N=3$ our formula reduces to
\begin{align*}
q_3(x) = \frac{2 \cdot 1}{3} \frac{1 - (1-x)^2}{((1-x)^2 + 1)^2} = \frac{2}{3} \frac{x(2-x)}{(2-x(2-x))^2},
\end{align*}
which agrees with formula of Schlemm for the anisotropic three-species model. As $N \rightarrow \infty$, it is straightforward to see that
\begin{align*}
\lim_{N \rightarrow \infty} N q_N (x) = 1
\end{align*}
pointwise in $x$, and even uniformly on any sub-compact set that is bounded away from 0. This agrees with the intuition that when all but one species is replaced at each time step, the asymptotic distribution of the fitnesses is still uniform on $[0, 1]$.
\end{remark}

\subsection{Shift Invariant Models for Two Adjacent Non-Replaced Species}

We again consider the shift invariant case $E_i = i + E_0$, but we now assume that $E_0$ consists of all but two locations which are adjacent to each other, and neither of which is adjacent to species zero. This assumption contains the standard, isotropic five-species model. In this framework we are able to re-derive Schlemm's recent \cite{schlemm:five_species} description of the asymptotic fitness distribution for the isotropic, $N=5$ model as the solution to a particular system of differential equations.

\begin{theorem}\label{thm:two_non_replace}
Suppose that $E_i = i + E_0$ for each $i$, and that $E_0 = \Z_N \backslash \{k, k+1\}$ where $k \not \in \{-2,-1,0,1 \}$. Then the stationary distribution is
\begin{align*}
\piN(\mx) = \sum_{i \in \Z_N} q_N(\mx_i, \mx_{i+1})
\end{align*}
where $q_N : \{ (u,v) : 0 \leq v \leq u \leq 1 \} \to \R_+$ has the form $q_N(u,v) = B_N(u)G_N(v) + A_N(v)$, with $A_N, B_N,$ and $G_N$ solutions to the system of equations
\begin{align}
(1- a_{N-2}(x))G_N'(x) - a_{N-2}'(x)G_N(x) + a_{N-3}'(x) \int_0^x G_N(s) \, ds = 0, \label{eqn:DEforG}
\end{align}
\begin{align}
& A_N'(x) = \frac{1}{1-x} \left[ - G_N'(x) \int_x^1 B_N(s) \, ds - B_N'(x) \int_0^x G_N(s) ds \right], \label{eqn:Afunction}
\end{align}
and
\begin{align}
A_N(v) &= a_{N-2}(v) A_N(v) + a_{N-3}(v)G_N(v) \int_v^1 B_N(s) \,ds + a_{N-3}(v)(1-v)A_N(v) \notag \\
&\,\, + 2 \int_0^v a_{N-3}(t) A_N(t) \, dt - a_{N-3}(v) \int_0^v A_N(t) \, dt \notag \\
&\,\, + B_N(v) \int_0^v a_{N-3}(t) G_N(t) \, dt \label{eqn:thirdEqn} \\
&\,\, + 2 \int_0^v (1-t)^{N-4} G_N(t) \int_t^1 B(s) \, ds \, dt + 2 \int_0^v (1-t)^{N-3} A_N(t) \, dt \notag \\
&\,\, + (N-5)\int_0^v (1-r)^{N-5} \int_r^1 \int_r^s \left[ B_N(s)G_N(t) + A_N(t) \right] \, dt \, ds \, dr \notag \\
&\,\, + (N-5)\int_0^v (1-r)^{N-5} \int_r^1 \int_t^1 \left[ B_N(s)G_N(t) + A_N(t) \right] \, ds \, dt \, dr, \notag
\end{align}
where $a_k(v)$ is the function
\begin{align*}
a_k(v) = \int_0^v (1-t)^{k-1} \, dt = \frac{1}{k} \left[ 1 - (1-v)^k \right].
\end{align*}
\end{theorem}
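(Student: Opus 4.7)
My plan is to follow the same template that worked for Theorem \ref{thm:frsi_models}, but with the extra complexity demanded by having two non-replaced species. The starting point is the functional equation of Proposition \ref{prop:stationary_dist_functional_eqn} together with the shift-invariance decomposition noted in the remark after it, which here takes the form $\pi_N(\mx) = \sum_{i\in\Z_N} q_N(\mx_i, \mx_{i+1})$. By shift invariance all $N$ summands on the right-hand side of \eqref{eqn:stationary_dist_functional_eqn} are equal after a cyclic relabelling that places the non-replaced pair at positions $k$ and $k+1$, so it suffices to analyse the $i=0$ term and thereby obtain a single integral equation for $q_N(u,v)$, where $u = \mx_k$ and $v = \mx_{k+1}$.

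Next I would evaluate the right-hand side by partitioning the sum $\sum_j q_N$ appearing inside it into four groups according to how the pair $(\cdot,\cdot)$ depends on $\mx$ versus $\my$: the central term $j=k$ contributing the fixed value $q_N(u,v)$; the two boundary terms $j=k-1$ and $j=k+1$, each mixing one fixed coordinate with one $\my$ coordinate; and the interior terms $j\notin\{k-1,k,k+1\}$, which are pure $\my$-pairs and split further into the pair straddling position $0$ (containing $\my_0$) and those lying entirely on one side of it. The argmin constraint forces $\my_0\in[0,v]$ (assuming the ordering $v\le u$, which is the domain restriction for $q_N$) and $\my_j\in[\my_0,1]$ for $j\in E_0\setminus\{0\}$. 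The $\my_j$ variables that do not occur inside a surviving $q_N$ factor integrate out immediately and produce powers $(1-\my_0)^{N-3}$, $(1-\my_0)^{N-4}$, $(1-\my_0)^{N-5}$, whose antiderivatives are exactly the functions $a_{N-2}$, $a_{N-3}$ and the $(N-5)$ prefactors that appear in \eqref{eqn:DEforG}--\eqref{eqn:thirdEqn}.

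With the right-hand side explicit, I would substitute the ansatz $q_N(u,v) = B_N(u)G_N(v) + A_N(v)$ and exploit the fact that $u$ enters the right-hand side in only two ways: through the central factor $B_N(u)G_N(v) + A_N(v)$ weighted by $a_{N-2}(v)$, and through $G_N(u)$ and $A_N(u)$ produced by integrating the $j=k-1$ term $q_N(\my_{k-1}, u) = B_N(\my_{k-1})G_N(u) + A_N(u)$ over $\my_{k-1}\in[\my_0,1]$. Treating $B_N$, $G_N$ and $A_N$ as a priori independent, differentiation in $u$ collapses the identity into two separate relations, one for the $B_N$-proportional part and one for the $G_N(u)$ and $A_N(u)$ parts. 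A further differentiation in $v$ of the $B_N$-proportional relation yields the ODE \eqref{eqn:DEforG} for $G_N$, while matching the $G_N(u)$ and $A_N(u)$ coefficients and solving for $A_N'$ produces \eqref{eqn:Afunction}. What remains after these separations is a $u$-independent identity which, when carefully written out with the contributions of the interior pairs on either side of $\my_0$, is exactly \eqref{eqn:thirdEqn}. Boundary conditions sufficient to close the system come from $\int\pi_N = 1$ and from the regularity of $q_N$ at $v=0$.

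The main obstacle is bookkeeping. The three boundary positions $k-1$, $k+1$, $k+2$ play genuinely different roles and cannot be collapsed by symmetry, and the interior pairs split asymmetrically depending on whether they straddle the minimum index $0$, which is the combinatorial source of the nested single and double integrals in \eqref{eqn:thirdEqn}. I also expect some care to be required in verifying that the separation by $u$-dependence really produces exactly three equations in the three unknowns $A_N, B_N, G_N$, rather than an overdetermined or underdetermined system, and in checking that the form $q_N(u,v)=B_N(u)G_N(v)+A_N(v)$ is genuinely forced (and not merely an additional ansatz) by the structure of the integral equation.
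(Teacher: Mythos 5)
Your plan follows the paper's proof essentially step for step: reduce the functional equation of Proposition \ref{prop:stationary_dist_functional_eqn} to a single integral equation for $q_N(u,v)$ via shift invariance, integrate out the $\my$-coordinates that do not appear inside a surviving $q_N$ factor to produce the $a_{N-2}$, $a_{N-3}$ and $(1-\cdot)^{N-5}$ weights, insert the ansatz $q_N(u,v)=B_N(u)G_N(v)+A_N(v)$, and separate the resulting identity by $u$- and $v$-dependence (differentiating in $u$, then in $v$ for the $B_N'$-proportional part) to extract \eqref{eqn:DEforG}, \eqref{eqn:Afunction} and \eqref{eqn:thirdEqn}. The only ingredient you leave implicit is the paper's explicit symmetrization of $q_N$ onto the domain $\{(u,v): 0\le v\le u\le 1\}$ before introducing the ansatz, and your closing worry about whether the product-plus-additive form is forced is resolved in the paper exactly as you suspect: it is taken as an ansatz, justified by its invariance under forward iteration of the integral equation rather than derived.
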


\begin{remark}
Note that equation \eqref{eqn:DEforG} for $G_N$ does not involve $A_N$ or $B_N$, and therefore can be solved on its own as a second-order differential equation. Then given the solution for $G_N$, the two equations \eqref{eqn:Afunction}, \eqref{eqn:thirdEqn} form a system which can be solved for $B_N$ and $A_N$. In the $N=5$ case this system appears in \cite[Corollary 2 and Proposition 5]{schlemm:five_species}. Furthermore, as in \cite[Proof of Theorem 1]{schlemm:five_species} equation \eqref{eqn:thirdEqn} can be differentiated multiple times to become a differential equation solely for $B_N$, which together with the solution for $G_N$ and equation \eqref{eqn:Afunction} determines $A_N$. Note that in equation \eqref{eqn:Afunction} the terms involving $A_N$ are entirely separate from the terms involving $B_N$ and $G_N$.
\end{remark}

\begin{remark}
In the $N=5$ case our solutions appear to be slightly different from \cite[Theorem 1]{schlemm:five_species}, but can be made to agree by making the substitutions
\begin{align*}
\mathcal{B}_0(x) = \int_0^{1-x} A(s) \, ds, \quad \mathcal{B}_1 (x) = \int_{1-x}^1 B(s) \, ds, \text{    and  } \mathcal{G}(x) = -\int_0^{1-x} G(s) \, ds.
\end{align*}
With this substitution, \eqref{eqn:DEforG} becomes
\begin{align*}
\left(1 - \frac{(1 - (1-x)^3))}{3} \right) \mathcal{G}''(1-x) - (1-x)^2 \mathcal{G}'(1-x) + (1-x)\mathcal{G}(1-x) = 0,
\end{align*}
which is equivalent to
\begin{align*}
(z^3 + 2) \mathcal{G}''(z) + 3z^2 \mathcal{G}'(z) + 3z \mathcal{G}(z) = 0.
\end{align*}
This same equation can be found towards the end of \cite[Proof of Theorem 1]{schlemm:five_species}, however we arrive at it very differently and now present an alternative method for solving it. In the latter formula we allow $z$ to be complex, which makes the differential equation second order with regular singular points at the three roots of $z^3 + 2 = 0$. Therefore the ODE can be reduced to the standard hypergeometric differential equation and its solution expressed in terms of hypergeometric functions or Riemann's P-function; see \cite{AbSteg} or \cite{poole} for more details. In this particular case the correct reduction is by allowing $\mathcal{G}(z) = \mathcal{H}(-z^3/2)$, which leads to the hypergeometric equation
\begin{align*}
3z(1-z) \mathcal{H}''(z) + (2 - 5z) \mathcal{H}'(z) - \mathcal{H}(z) = 0.
\end{align*}
This is the standard form of Euler's hypergeometric equation, whose two linearly independent solutions are expressed in terms of Gauss' hypergeometric function $\,_2F_1$ as
\begin{align*}
\,_2F_1 \left( \frac{1}{3} + i \frac{\sqrt{2}}{3}, \frac{1}{3} - i \frac{\sqrt{2}}{3}; \frac{2}{3}; z \right), \quad z^{1/3} \,_2F_1 \left( \frac{2}{3} + i \frac{\sqrt{2}}{3}, \frac{2}{3} - i \frac{\sqrt{2}}{3}; \frac{4}{3}; z \right).
\end{align*}
This agrees with the solutions found by Schlemm.
\end{remark}

\begin{proof}
By the remark after Proposition \ref{prop:stationary_dist_functional_eqn} it follows that $\piN(\mx)$ has the form
\begin{align}\label{eqn:two_pi_form}
\piN(\mx) = \sum_{i \in \Z_N} q_N(\mx_i, \mx_{i+1})
\end{align}
for a yet to be determined function $q_N$. We also have the functional equation \eqref{eqn:stationary_dist_functional_eqn} for the stationary distribution, and by the shift invariance of the model we can write
\begin{align*}
\piN(\mL_i \my + \mL_i^c \mx) = \piN(\mx_{i+k}, \mx_{i+k+1}, \my_{i+k+2}, \ldots, \my_{i+k-1}).
\end{align*}
Combining this with \eqref{eqn:two_pi_form} shows that we can write $\piN(\mL_i \my + \mL_i^c \mx)$ as
\begin{align}\label{eqn:two_q_sum}
q_N(\my_{i+k-1}, \mx_{i+k}) + q_N(\mx_{i+k}, \mx_{i+k+1}) + q_N(\mx_{i+k+1}, \my_{i+k+2}) + \sum_{j=3}^{N-1} q_N(\my_{i+k+j-1}, \my_{i+k+j}).
\end{align}
Similarly the indicator function in the functional equation \eqref{eqn:stationary_dist_functional_eqn} can be written as
\begin{align}\label{eqn:two_indicator_function}
\1{a(\mL_i \my + \mL_i^c \mx) = i} = \1{\my_i \leq \min \{ \mx_{i+k}, \mx_{i+k+1} \}} \1{ \my_{j} \geq \my_i \textrm{ for } j \in \Z_N \backslash \{i, i+k, i+k+1\}}.
\end{align}
For all $\mx \in \mcU_N$ let $\mC_{N,i}(\mx)$ denote the set of all $\my \in \mcU_N$ satisfying the inequalities on the right hand side of \eqref{eqn:two_indicator_function}. To find the stationary distribution we insert \eqref{eqn:two_pi_form}, \eqref{eqn:two_q_sum}, \eqref{eqn:two_indicator_function} into \eqref{eqn:stationary_dist_functional_eqn} and obtain
\begin{align*}
&\sum_{i \in \Z_N} q_N(\mx_i, \mx_{i+1}) \\
\, &= \!\!\! \sum_{i \in \Z_N} \! \int_{\mC_{N,i}(\mx)} \!\!\!\! q_N(\my_{i+k-1}, \mx_{i+k}) + q_N(\mx_{i+k}, \mx_{i+k+1}) + q_N(\mx_{i+k+1}, \my_{i+k+2}) + \!\! \sum_{j=3}^{N-1} \! q_N(\my_{i+k+j-1}, \my_{i+k+j}) \, d \my
\end{align*}
Note that each integral on the right hand side above is a function of only $\mx_{i+k}$ and $\mx_{i+k+1}$ (all other variables get integrated out), and similarly each term on the left hand side is a function of two adjacent variables. Therefore we search for solutions of the form
\begin{align}\label{eqn:two_operator_eqn}
q_N(\mx_i, \mx_{i+1}) &= \int_{\mC_{N,i-k}(\mx)} \!\!\!\! q_N(\my_{i-1}, \mx_{i}) + q_N(\mx_{i}, \mx_{i+1}) + q_N(\mx_{i+1}, \my_{i+2}) + \!\! \sum_{j=3}^{N-1} \! q_N(\my_{i+j-1}, \my_{i+j}) \, d \my 
\end{align}
Since the equation above is the same for all $i$ it is enough to consider $i = 0$, and from now on we write $\mx_0 = u$ and $\mx_1 = v$. The range of integration becomes the set
\begin{align*}
\mC_{N}(u,v) := \{ \my \in \mcU_N : \my_{-k} \leq \min \{ u,v \}, \my_j \geq \my_{-k} \textrm{ for } j \in \Z_N \backslash \{-k, 0, 1 \} \}.
\end{align*} 
Note that $\my_{-k}$ plays a different role from the other $\my_j$ terms in equation \eqref{eqn:two_operator_eqn}, since they are separate in the set $\mC_{N}(u,v)$. Separating out the $\my_{-k}$ terms from equation \eqref{eqn:two_operator_eqn} leads to
\begin{align}
q_N(u,v) &= \int_{\mC_{N}(u,v)} \!\!\!\! q_N(\my_{-1}, u) + q_N(u, v) + q_N(v, \my_{2}) + q_N(\my_{-k-1}, \my_{-k}) + q_N(\my_{-k}, \my_{-k + 1}) \notag \\
 & \quad \quad \quad \quad \quad \quad \quad + \!\!\!\!\!\!\!\!\!\!\!\!\!\!\! \sum_{j \not \in \{ -1,0,1,-k-1,-k \}} \!\!\!\!\!\!\!\!\!\!\!\!\!\! q_N(\my_{j}, \my_{j+1}) \, d \my. \notag
\end{align}
Now $q_N$ is an unknown function on $[0,1]^2$ satisfying the above equation, with the constraints
\begin{align*}
q_N(u,v) \geq 0, \quad \int_{[0,1]^2} q_N(u,v) \, du \, dv = \frac{1}{N}
\end{align*}
that make it a density. If one inserts a symmetric function $f$ in place of $q_N$ into the right hand side of \eqref{eqn:two_operator_eqn} then the output is also a symmetric function, which motivates us to restrict our search to symmetric solutions $q_N(u,v) = q_N(v,u)$. With this in mind we restrict the domain of definition of the function $q_N$ to the set $\mathcal{S} = \{(u,v) : 0 \leq v \leq u \leq 1 \}$, and we symmetrize the right hand side to be defined on this domain (which, in general, means that we replace a function $f$ on $[0,1]^2$ with its symmetrization $\tilde{f}(u,v) = (f(u,v) + f(v,u))/2$ on $\mathcal{S}$). Then tedious but straightforward computations lead us to the equation
\begin{align}
q_N(u, v) &= a_{N-2}(v) q_N(u, v) + a_{N-3}(v) \int_u^1 q(s, u) \, ds + a_{N-3}(v) \int_v^1 q(s, v) \, ds \notag \\
& \,\,\, + \int_0^v a_{N-3}(t) q(u, t) \, dt + a_{N-3}(v) \int_v^u q(u, t) \, dt + \int_0^v a_{N-3}(t) q(v, t) \, dt \notag \\
& \,\,\, + 2 \int_0^v (1-t)^{N-4} \int_t^1 q(s, t) \, ds \, dt   + (N-5) \int_0^v (1-r)^{N-5} \int_r^1 \int_r^s q(s, r) \, dr \, ds \, dr \notag \\
& \,\,\, + (N-5) \int_0^v (1-r)^{N-5} \int_r^1 \int_r^1 q(s, t) \, ds \, dt \, dr, \label{eqn:two_symmetric_eqn}
\end{align}
Note that the $a_k(v)$ terms are simply the probabilities
\begin{align*}
a_k(v) = \P \left( U_0 \leq v, \, U_{j} \geq U_0 \textrm{ for } j = 1,2,\ldots,k-1 \right)
\end{align*}
where the $U_i$ are independent uniform random variables on $[0,1]$. The $a_{N-2}$ and $a_{N-3}$ factors that appear in \eqref{eqn:two_symmetric_eqn} are a result of integrating out most components of the $\my$ variables in \eqref{eqn:two_operator_eqn} over the sets $\mC_{N,i}(\mx)$.

Thus we are left to solve \eqref{eqn:two_symmetric_eqn}, and at this point we introduce the ansatz $q_N(u,v) = B_N(u)G_N(v) + A_N(v)$ for functions $B_N, G_N, A_N : [0,1] \to \R$. This is the form that already appears in Schlemm's solution \cite{schlemm:five_species}, and which we came upon by using a forward iteration method on \eqref{eqn:two_symmetric_eqn} and observing that this form is invariant under the iteration. Inserting it into \eqref{eqn:two_symmetric_eqn} results in the equation
\begin{align}
B_N(u) G_N(v) + A_N(v) &= a_{N-2}(v) B_N(u) G_N(v) + a_{N-2}(v) A_N(v) \notag \\
&\,\, + a_{N-3}(v) G_N(u) \int_u^1 B_N(s) \, ds + a_{N-3}(v) (1-u)A_N(u) \notag \\
&\,\, + a_{N-3}(v) G_N(v) \int_v^1 B_N(s) \, ds + a_{N-3}(v)(1-v)A_N(v) \notag \\
&\,\, + B_N(u) \int_0^v a_{N-3}(t) G_N(t) \, dt + \int_0^v a_{N-3}(t) A_N(t) \, dt \notag \\
&\,\, + a_{N-3}(v) B_N(u) \int_v^u G_N(t) \,dt + a_{N-3}(v) \int_v^u A_N(t) \, dt \label{eqn: insertAnsatz} \\
&\,\, + B_N(v) \int_0^v a_{N-3}(t) G_N(t) \,dt + \int_0^v a_{N-3}(t) A_N(t) \, dt \notag \\
&\,\, + 2 \int_0^v (1-t)^{N-4}G_N(t) \int_t^1 B_N(s) \,ds \, dt + 2 \int_0^v (1-t)^{N-3} A_N(t) \, dt \notag \\
& \,\, + (N-5) \int_0^v (1-r)^{N-5} \int_r^1 \int_r^s q(s, t) \, dt \, ds \, dr  \notag \\
& \,\, + (N-5) \int_0^v (1-r)^{N-5} \int_r^1 \int_t^1 q(s, t) \, ds \, dt \, dr. \notag
\end{align}
Equation \eqref{eqn:thirdEqn} is found by equating all the terms in \eqref{eqn: insertAnsatz} involving $v$. We then differentiate \eqref{eqn: insertAnsatz} with respect to $u$ to get
\begin{align*}
A_N'(u) &= \frac{1}{1-u} \left[ - G_N'(u) \int_u^1 B_N(s) \, ds - B_N'(u) \int_0^u G_N(s) ds \right] \\
&\,\,  + \frac{1}{a_{N-3}(v)(1-u)} \left[ (1 - a_{N-2}(v))B_N'(u)G(v) + a_{N-3}(v)B_N'(u) \int_0^v G_N(s) \, ds \right. \\
&\quad\quad\quad\quad\quad\quad\quad\quad\quad\quad \left. - B_N'(u) \int_0^v a_{N-3}(s) G_N(s) \, ds \right].
\end{align*}
By equating all terms that are functions of $u$ alone we derive \eqref{eqn:Afunction}. This leaves the terms involving both $u$ and $v$
\begin{align*}
0 =  (1 - a_{N-2}(v))B_N'(u)G(v) + a_{N-3}(v)B_N'(u) \int_0^v G_N(s) \, ds  - B_N'(u) \int_0^v a_{N-3}(s) G_N(s) \, ds.
\end{align*}
We factor out $B_N'(u)$ and then differentiate once with respect to $v$ to derive \eqref{eqn:DEforG}.

\end{proof}

\bibliographystyle{alpha}
\bibliography{../BibTex/citations}

\end{document}